\newtheorem{theorem}{Theorem}[section]
\newtheorem{proposition}[theorem]{Proposition}
\newtheorem{lemma}[theorem]{Lemma}
\theoremstyle{definition}
\date{2010}
\title{The group of almost-periodic homeomorphisms of the real line}
\author{Bertrand Deroin}
\begin{document}
\maketitle
\begin{abstract}
We study the group of almost-periodic homeomorphisms of the real line. Our main result states that an action of a finitely generated group on the real line without global fixed point is conjugated to an almost-periodic action without almost fixed point. This is equivalent to saying that the action on the real line can be compactified to an action on a $1$-dimensional lamination of a compact space, without global fixed point. As an application we give an alternative proof of Witte's theorem: an amenable left orderable group is locally indicable. 
\end{abstract}

\section{Introduction} 

A group is \textit{left-orderable} if it admits a total order which is invariant by left multiplications. A well-known fact is that a countable left-orderable group acts faithfully on the real line by preserving orientation homeomorphisms~\cite{Ghys}. This dynamical view on left orderable groups has been powerful, see~\cite{Navas}, and it is an interesting problem to look for some structure on the real line invariant by such a group. 
These structures must be "soft", since, for instance, some left-orderable group cannot act by diffeomorphisms of class $C^1$, see~\cite{Navas2}. 

In this note we study \textit{almost-periodic} actions on the real line. The notion of almost-periodicity goes back to Bohr, and has been developed in various directions, especially recently with the development of almost-periodic tilings. An action of a group on the real line is almost-periodic if the real line can be compactified as a leaf in a laminated compact space, in such a way that the action extends to an action by homeomorphisms preserving each individual leaf. 

It is easy to prove that an action of a finitely generated group on the real line is conjugated to an almost periodic action. For instance, by taking the conjugation increasing fastly at infinity. However, such an almost periodic action has an \textit{almost fixed point}: namely, points close to infinity are displaced by the generators a short distance. Our main result is that every action of a finitely generated group on the real line is in fact conjugated to an almost-periodic action without almost-fixed points, thus leading to an action of the group on a $1$-dimensional laminated compact space \textit{without} fixed point. 

This new material permits to give an alternative proof of the following theorem by Dave Witte~\cite{W}, which had been conjectured by Linnell~\cite{Linnell}: a left-orderable group which is amenable has a non trivial morphism to the integers. However, we believe that this is interesting in its own, and will serve for other applications.

\section{Almost-periodic representations}\label{s: almost periodic}

The set of continuous functions on the real line, $C^0 (\mathbb R)$, is equipped with the topology of uniform convergence on compact subsets, i.e. the compact open topology. A continuous function $f\in C^0 (\mathbb R)$ is \textit{almost-periodic} if the set of its translates $T_s f (x) = f(x+s)$ is relatively compact in $C^0 (\mathbb R)$.  

The group of homeomorphisms of the real line preserving the orientation, $\mathrm{Homeo}^+ (\mathbb R)$, is also equipped with the compact open topology, which turns it into a topological group. A homeomorphism $ h\in \mathrm{Homeo}^+ (\mathbb R) $  such that $f(x) = h(x) - x$ is an almost periodic function will be called almost-periodic. The set of almost-periodic and orientation preserving homeomorphisms will be denoted $APH^+ ({\mathbb R})$. 

\begin{proposition} 
$APH ^+ (\mathbb R)$  is a subgroup of $\mathrm{Homeo}^+ (\mathbb R)$.
\end{proposition}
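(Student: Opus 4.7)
The plan is to verify the three subgroup axioms. The identity is immediate since $\id - \id \equiv 0$ is constant, hence almost-periodic. For closure under composition, I would write $f_i = h_i - \id$ and use the identity $(h_1 \circ h_2)(x) - x = f_1(h_2(x)) + f_2(x)$. Since sums of almost-periodic functions are almost-periodic, it suffices to show that $f_1 \circ h_2$ is almost-periodic. By Bohr's characterization through relatively dense $\varepsilon$-almost-periods, given $\varepsilon > 0$ I would pick a common $\delta$-almost-period $\tau$ of $f_1$ and $f_2$ with $\delta$ chosen small enough that uniform continuity of $f_1$ forces oscillations smaller than $\varepsilon/2$ on shifts of magnitude $\delta$. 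Writing $f_2(x+\tau) = f_2(x) + O(\delta)$ then gives $f_1(h_2(x+\tau)) = f_1(h_2(x) + \tau + O(\delta))$, within $\varepsilon$ of $f_1(h_2(x))$; so $\tau$ is also an $\varepsilon$-almost-period of $f_1 \circ h_2$, and such $\tau$ are relatively dense.

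Closure under inversion is the delicate step, and I would exploit the paper's observation that $\mathrm{Homeo}^+(\mathbb R)$ under the compact-open topology is a topological group, so inversion is continuous. Consider the conjugation orbit $O(h) = \{T_{-s} \circ h \circ T_s : s \in \mathbb R\}$, whose displacement orbit $\{T_s f\}$ with $f = h - \id$ is relatively compact in $C^0(\mathbb R)$ by hypothesis. The crucial additional step is to verify that the closure of $O(h)$ in $C^0$ actually lies inside $\mathrm{Homeo}^+(\mathbb R)$, i.e.\ every limit of conjugates is still strictly increasing. Once that is granted, $O(h)$ is relatively compact inside the topological group $\mathrm{Homeo}^+(\mathbb R)$, and continuity of inversion yields that $O(h^{-1}) = \{(T_{-s} h T_s)^{-1}\}$ is relatively compact there as well, so $h^{-1} - \id$ is almost-periodic.

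To establish the strictness step, I would appeal to minimality of the translation action on the Bohr hull $K = \overline{\{T_s f\}} \subset C^0(\mathbb R)$. The continuous map $\Phi : K \times \mathbb R \to \mathbb R$, $\Phi(\xi, y) = \xi(y+d) - \xi(y)$, satisfies the equivariance $\Phi(T_s \xi, y) = \Phi(\xi, y+s)$, so a non-strict limit $H = \id + \bar f$ with $\bar f(y_1 + d) - \bar f(y_1) = -d$ for some $d > 0$ produces a nonempty, closed, translation-invariant subset of $K$ on which the distance-$d$ increment attains its lower bound $-d$. By minimality this subset must equal $K$, and in particular $f$ itself would inherit a corresponding structural property which one must reconcile with $h$ being strictly increasing. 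The main obstacle is precisely this strictness step: the naive limit argument only produces non-decreasing functions, so the subtle work lies in combining the rigidity of almost-periodic dynamics (minimality on the Bohr hull) with strict monotonicity of $h$ to rule out degenerate limits of the conjugation orbit.
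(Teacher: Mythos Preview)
The paper's proof is a one-line reduction: the identity $T_s(h-\id)=\tau_s^{-1}\circ h\circ\tau_s-\id$ shows that $h\in APH^+(\mathbb R)$ if and only if the \emph{conjugation orbit} $\{\tau_s^{-1}h\tau_s:s\in\mathbb R\}$ is relatively compact. Since conjugation by $\tau_s$ is a group automorphism, one has $\tau_s^{-1}(h_1h_2)\tau_s=(\tau_s^{-1}h_1\tau_s)(\tau_s^{-1}h_2\tau_s)$ and $\tau_s^{-1}h^{-1}\tau_s=(\tau_s^{-1}h\tau_s)^{-1}$, so continuity of the group operations disposes of composition and inversion simultaneously. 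Your route is genuinely different for composition: you decompose $h_1h_2-\id=(f_1\circ h_2)+f_2$ and chase Bohr $\varepsilon$-almost-periods by hand. This is correct, but more laborious than simply invoking continuity of composition $C^0\times C^0\to C^0$ on the product of the two compact conjugation hulls.

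For inversion you in effect rediscover the paper's conjugation idea and, to your credit, isolate the strictness issue that the paper passes over in silence: one needs the closure of $O(h)$ to stay inside $\mathrm{Homeo}^+(\mathbb R)$ before invoking continuity of inversion. Your minimality sketch is the right mechanism and can be completed as follows. The function $\xi\mapsto \inf_{y\in\mathbb R}\bigl(\xi(y+d)-\xi(y)\bigr)$ is continuous on the Bohr hull $K$ (convergence there is uniform) and translation-invariant, hence constant on $K$ by minimality. If some limit $\bar f\in K$ had $\id+\bar f$ constant on an interval of length $d$, this constant value would be $-d$, hence also $-d$ for $\xi=f$. But $y\mapsto f(y+d)-f(y)=h(y+d)-h(y)-d$ is itself almost-periodic and therefore \emph{attains} its infimum at some $y_0$, giving $h(y_0+d)=h(y_0)$ and contradicting strict monotonicity of $h$. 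So your outline closes; the paper's argument is shorter because it treats composition and inversion uniformly, while yours supplies a justification the paper leaves implicit.
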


\begin{proof}
This is a consequence of the following characterization of almost-periodicity: a homeomorphism from the real line to itself is almost-periodic if the set consisting of its conjugates by all the translations is relatively compact. Indeed, if $h$ belongs to $\mathrm{Homeo}^+(\mathbb R)$ and $\tau_s$ is the translation $\tau_s (x) = x+s$, then $ T_s ( h-\mathrm{id} ) = \tau_{s}^{-1} \circ  h\circ \tau_s - \mathrm{id}$. \end{proof}

An action of a group on the line whose image is contained in $APH^+(\mathbb R)$ will be called almost-periodic. There are various ways to construct a faithful almost-periodic action of a left orderable and countable group $G$ on the line. The most simple is to begin with a faithful action by homeomorphims on the interval, and to extend it to the line by conjugating by the powers of the translation $x\mapsto x+1$. 
Hence, $APH^+(\mathbb R)$ contains a copy of any left orderable and countable group. 

The typical example of an almost periodic function is a trigonometric polynomial of the form $f ( x ) = \sum _i  a_i \cos( b_i x + c_i)$, where $a_i, b_i, c_i$ are real numbers. Such a function can be seen as the restriction of an analytic function defined on the $n$-dimensional torus to the orbit of an irrational linear flow. This fact turns out to be a characterization of almost-periodicity: a function $f$ is almost-periodic if and only if there exists a compact space $X$ and a continuous flow $\{ \Phi_s \}_{s \in \mathbb R}$ acting on $X$, such that there exists a point $p\in X$ such that $f(x) = F( \Phi_x p)$ for some continuous function $F: X \rightarrow \mathbb R$. The quadruple $(X,\Phi,p,F)$ may be chosen in the following way: $X$ is the closure of the set $\{  T_s f:\ s\in \mathbb R \}$ in $C^0 (\mathbb R)$ for the compact open topology (which by definition is compact), the flow $\Phi_s$ is the operator $T_s$, the point $p$ is the function $f$, and the function $F$ is the evaluation at the point $0$. 

Let us mimic this construction for almost-periodic actions of a given abstract group $G$. Introduce the set $APR(G)$ whose elements are the representations of $G$ in $APH^+ (\mathbb R)$. Then $APR(G)$ can be seen as a closed subset of $APH^+ (\mathbb R)^{\mathcal G}$, which provide a topology on $APR(G)$. Define the \textit{translation flow} $\{\Phi_s\}_{s\in \mathbb R}$ acting on $APR (G) $ by conjugation by the translations $\tau_s (x)= x+s$, namely $$\Phi_s (\rho) (g) : = \tau_s \circ \rho(g) \circ \tau_s^{-1}$$ for every $\rho \in APR(G) $ and $g\in G$. This is a topological flow acting on $APR(G)$. 

Suppose for a moment that the orbit of an element $\rho \in APR (G)$ by the translation flow is the real line, i.e. $\rho$ does not commute with any non trivial translation. A point $s$ of the real line can be put in correspondance with the representation $\Phi_s \rho$, and one may think to the action of the group $G$ on the real line given by the representation $\rho$ as acting on the orbit of $\rho$ by the translation flow. This permits to define a representation $\mathrm{Univ}: G\rightarrow \mathrm{Homeo} (APR(G))$, that will be referred to the \textit{universal} representation:
\[  \mathrm{Univ} (g) (\rho) : = \tau_{-\rho(g) (0)} \circ \rho \circ \tau_{\rho(g)(0)} .\]
The verification that this formula gives rise to a genuine representation is tedious, but straighforward. We leave it to the reader. 

By construction, the action of $G$ on $APR(G)$ defined by $\mathrm{Univ}$ preserves each orbit $\Phi_{\mathbb R} (\rho)$, and is semi-conjugated to $\rho$ on it. More precisely, we have: 
\[ \mathrm{Univ} (g) ( \Phi_{-s} (\rho) ) = \Phi _{- \rho (g) (s)} (\rho) , \]
for every $s\in \mathbb R$, $\rho \in APR(G)$, and $g\in G$. This is the reason for the terminology "universal". 

\begin{proposition} \label{p:closure under translation flow} 
Let $G$ be a finitely generated group and $\rho_0 \in APR(G)$ be an almost-periodic representation. Then the set $X(\rho_0) = \overline{\Phi_{\mathbb R}(\rho_0)}$ is compact, and invariant by the translation flow $\{\Phi_s\}_{s\in \mathbb R}$. Moreover, there is a continuous action of $G$ on $X$, preserving every $\Phi$-orbit, such that for every $\rho \in X(\rho_0)$ which does not commute with any non trivial translation, the action of $G$ on $\Phi_{\mathbb R} (\rho)$ is topologically conjugated to $\rho$. 
\end{proposition}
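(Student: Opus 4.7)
The strategy is to deduce each assertion from the structure of $APR(G)$ as a closed subset of $APH^+(\mathbb R)^G$ equipped with the product topology, combined with the almost-periodicity of the displacement functions $\rho_0(g)-\mathrm{id}$ for $g\in G$.

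For the compactness of $X(\rho_0)$, I would fix a finite generating set $S=\{g_1,\dots,g_k\}$ and use the observation already implicit in the proof of the preceding proposition, namely that $\Phi_s(\rho_0)(g)-\mathrm{id} = T_{-s}(\rho_0(g)-\mathrm{id})$. Since each $\rho_0(g_i)-\mathrm{id}$ is almost-periodic, its translates form a relatively compact family in $C^0(\mathbb R)$, hence $\{\Phi_s(\rho_0)(g_i):s\in\mathbb R\}$ has compact closure in $APH^+(\mathbb R)$. Continuity of composition and inversion on $\mathrm{Homeo}^+(\mathbb R)$ then propagates this relative compactness from the generators to every $g\in G$ by induction on word length, and Tychonoff's theorem gives relative compactness of $\Phi_{\mathbb R}(\rho_0)$ in $APH^+(\mathbb R)^G$. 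Since $APR(G)$ is cut out by closed conditions (the group relations), the closure $X(\rho_0)\subset APR(G)$ is compact.

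Invariance of $X(\rho_0)$ under the translation flow is formal: each $\Phi_t$ is a self-homeomorphism of $APR(G)$ preserving the orbit $\Phi_{\mathbb R}(\rho_0)$, hence preserving its closure. For the continuous $G$-action, I would rewrite the universal representation as
\[\mathrm{Univ}(g)(\rho) \;=\; \Phi_{-\rho(g)(0)}(\rho),\]
which exhibits $\mathrm{Univ}(g)$ as the composition of the continuous evaluation $\rho\mapsto \rho(g)(0)$ with the jointly continuous flow $(t,\rho)\mapsto \Phi_t(\rho)$; manifestly $\mathrm{Univ}(g)$ then carries each $\Phi$-orbit into itself.

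Finally, the conjugacy on an individual orbit falls out of the identity recalled in the excerpt,
\[\mathrm{Univ}(g)(\Phi_{-s}(\rho)) \;=\; \Phi_{-\rho(g)(s)}(\rho).\]
When $\rho$ commutes with no nontrivial translation, the orbit map $\psi:s\mapsto \Phi_{-s}(\rho)$ is a continuous bijection from $\mathbb R$ onto $\Phi_{\mathbb R}(\rho)$, and the identity above says precisely that $\psi$ intertwines the action of $G$ on $\mathbb R$ via $\rho$ with the universal action on the orbit. I expect the only genuinely delicate step to be the compactness claim, where one must use that $\mathrm{Homeo}^+(\mathbb R)$ with the compact-open topology is a topological group so that coordinate-wise relative compactness on a finite generating set upgrades to relative compactness in every coordinate of $APH^+(\mathbb R)^G$. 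Everything else is essentially formal once $\mathrm{Univ}(g)$ is reinterpreted as the flow map $\rho\mapsto \Phi_{-\rho(g)(0)}(\rho)$.
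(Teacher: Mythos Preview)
The paper does not give a separate proof of this proposition; it is stated as a summary of the construction carried out in the paragraphs immediately preceding it (the definition of $\mathrm{Univ}$, the identity $\mathrm{Univ}(g)(\Phi_{-s}(\rho))=\Phi_{-\rho(g)(s)}(\rho)$, and the remark that the closure of the translates of an almost-periodic function is compact). Your argument makes precisely these implicit steps explicit---relative compactness via the displacement functions $\rho_0(g)-\mathrm{id}$, propagation to all of $G$ through the topological-group structure, and the conjugacy via the orbit map $s\mapsto\Phi_{-s}(\rho)$---so it matches the paper's intended reasoning.
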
 

Observe that the flow $\Phi$ may have fixed points on $X(\rho_0)$. This phenomena occurs when the representation $\rho_0$ almost commutes with a translation at infinity. However, one can slightly change the construction, so that this does not happen, for instance adding to the group $G$ an almost periodic group which is far from being periodic, including when we look at the action close to infinity.



\section{Actions without almost fixed points} 

Let $G$ be a finitely generated group with finite generating set $\mathcal G$ and $\rho \in APR (G)$ an almost-periodic representation. We say that $\rho$ has an \textit{almost fixed point} if 
$$\inf _{x\in \mathbb R} \sup _{g\in \mathcal G} |\rho(g)(x) - x | =0.$$ 
An equivalent way to think about this property is to consider the closure $X(\rho)$ of the orbit of the representation $\rho$ under the translation flow $\Phi$ (see Proposition~\ref{p:closure under translation flow}): then an almost fixed point will provide a global fixed point for the representation $\mathrm{Univ}$ of $G$ on the space $X$. It is not easy to construct almost-periodic actions without almost fixed points. The main new material of this note is to provide such a construction, if the group is finitely generated and left-orderable:

\begin{theorem} 
\label{t: almost-periodic action}
An action of a finitely generated group on the real line is topologically conjugated to a almost-periodic action. Moreover, if the original action has no fixed point, then it is possible to find a conjugacy to a almost-periodic action without almost fixed point. 
\end{theorem}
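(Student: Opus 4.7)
For the first assertion, I would conjugate $\rho$ by a homeomorphism $h\in\mathrm{Homeo}^+(\mathbb R)$ that grows fast enough at $\pm\infty$ so that each displacement $x\mapsto h\rho(g)h^{-1}(x)-x$, $g\in\mathcal G$, tends to zero at infinity. A continuous function vanishing at infinity is almost-periodic: its translates $T_s f$ become uniformly small outside compact sets for $|s|$ large, so the family $\{T_s f\}_{s\in\mathbb R}$ is relatively compact in $C^0(\mathbb R)$. Hence $h\rho h^{-1}\in APR(G)$.

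For the ``moreover'' part, this first recipe is counterproductive --- collapsing displacements to zero at infinity is exactly what creates almost fixed points. My plan is first to apply the first part to reduce to $\rho\in APR(G)$, and then to examine the compact dynamical system $(X(\rho),\Phi,\mathrm{Univ})$ of Proposition~\ref{p:closure under translation flow}. Because, as noted before the theorem, an almost fixed point of $\rho$ yields a global $\mathrm{Univ}$-fixed point in $X(\rho)$, it suffices to find a further conjugate $\tilde\rho=k\rho k^{-1}\in APR(G)$ whose orbit closure $X(\tilde\rho)$ contains no $\mathrm{Univ}$-fixed point. The no-global-fixed-point hypothesis on $\mathbb R$ guarantees that the pointwise displacement $\delta(x)=\max_{g\in\mathcal G}|\rho(g)(x)-x|$ is strictly positive for every $x\in\mathbb R$; the task is to upgrade this pointwise positivity to a uniform lower bound for $\tilde\rho$.

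My strategy for constructing $k$ is to define it as the pullback to $\mathbb R$ of a suitable continuous positive function on a compactification of the $G$-action, built from $X(\rho)$ and, if necessary, enlarged by an auxiliary almost-periodic factor that is ``far from periodic at infinity'' (as hinted in the remark after Proposition~\ref{p:closure under translation flow}). Heuristically, $k'$ should be comparable to $1/\delta$ in regions where $\delta$ is small, but this rescaling must be engineered on the compactification so that $k$ itself remains almost-periodic; the role of the auxiliary factor is to prevent the compactification from harbouring a $\mathrm{Univ}$-fixed point, thereby guaranteeing that the resulting rescaling function is bounded away from zero on the whole compact space. The main obstacle is precisely this simultaneous balancing: almost-periodicity is a compactness condition that tends to permit displacements to shrink at infinity, while the uniform positivity condition forbids it. Reconciling the two amounts to ``lifting'' the global positivity of $\delta$ on $\mathbb R$ to global positivity on the compactification --- which is the heart of the argument and the step where the no-global-fixed-point hypothesis must be used essentially, both to rule out minimal $\mathrm{Univ}$-invariant singletons in $X(\rho)$ via a Zorn-style argument and to ensure the auxiliary factor can be grafted on without destroying the topological conjugacy class of $\rho$.
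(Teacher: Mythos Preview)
Your first paragraph is fine and matches the paper's own remark that fast-growing conjugacies give almost-periodicity (at the price of almost fixed points).

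For the second part, however, there is a genuine gap, and the paper's route is quite different from yours. Your plan is to start from an almost-periodic $\rho$ and then, working on the compactification $X(\rho)$, pull back a continuous positive density $F$ (morally $F\sim 1/\delta$) to build a conjugacy $k$. The problem is circular: after your first step, $X(\rho)$ \emph{does} contain $\mathrm{Univ}$-fixed points (indeed the trivial representation lies in the closure, since displacements go to zero at infinity), so $\delta$ vanishes on $X(\rho)$ and no continuous $F$ comparable to $1/\delta$ exists. The ``auxiliary almost-periodic factor'' you invoke is the remark after Proposition~\ref{p:closure under translation flow}, but that remark addresses fixed points of the \emph{translation flow} $\Phi$ (representations commuting with a translation), not fixed points of $\mathrm{Univ}$; grafting on an extra almost-periodic element does not remove the trivial representation from the closure. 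Your Zorn-style remark at the end does not rescue this either: minimal $\mathrm{Univ}$-invariant sets in $X(\rho)$ may perfectly well be singletons when $\rho$ has almost fixed points. In short, you have correctly located the obstacle (``reconciling the two'') but not overcome it.

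The paper bypasses the compactification entirely and works directly on $\mathbb R$ in two concrete steps. First (Lemma~\ref{l: lipschitz conjugation}), it conjugates the action into $\mathrm{Bilip}^+(\mathbb R)$ by a measure-theoretic averaging trick: push a Cauchy-tailed probability $\lambda$ by a weighted sum over $G$ and straighten the result back to $\lambda$. Second, once the generators are $K$-bilipschitz, it defines $x_0=0$, $x_{n+1}=\max_{g\in\mathcal G}g(x_n)$, and conjugates by the piecewise-affine map sending $x_n\mapsto n$. The bilipschitz bound forces $K^{-1}\le \delta_{n+1}/\delta_n\le K$, so the new generators stay bilipschitz, and by construction the displacement of the enlarged generating set $\mathcal G\cup\mathcal G^2$ is trapped between $1$ and $4$ uniformly on $\mathbb R$. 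This simultaneously gives almost-periodicity (bounded displacement plus bilipschitz implies the conjugacy-orbit is relatively compact) and the uniform lower bound that rules out almost fixed points. The Lipschitz step is the key idea you are missing: it is what makes the naive rescaling $x_n\mapsto n$ land back in a compact family of homeomorphisms.
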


Observe that it is only necessary to prove the second part of the result, since we can add to the group $G$ a non trivial translation, with the effect of killing every fixed point. 

The proof of Theorem~\ref{t: almost-periodic action} will be done in two steps. First, we will prove that the action is conjugated to an action by  bilipschitz homeomorphisms, and then we will modify the action so that the displacement of every element is uniformly bounded, but the minimal displacements on the left and on the right over the generating set are uniformly bounded from below by a positive constant. Such a representation is almost-periodic and has no almost fixed point, so this will complete the proof.

We denote by $\mathrm{Bilip}^+ (\mathbb R) $ the group of orientation preserving bilipschitz homeomorphisms of the real line. For every $h\in \mathrm{Bilip}^+ (\mathbb R)$, we denote $K(h)$ the minimum of the numbers $K\geq 1$ such that  
\begin{equation} \label{eq: lipschitz}\forall x, y \in{\mathbb R} \ \ \ \ \ \ \ \   K^{-1}\cdot |y-x| \leq |h(y) -h(x) | \leq K \cdot |y-x|.\end{equation} 
We equipp $\mathrm{Bilip}^+ (\mathbb R)$ with the topology of uniform convergence on compact subsets of the real line.

\begin{lemma} \label{l: lipschitz conjugation}
A finitely generated group of homeomorphisms of the real line is conjugated to a group acting by Lipschitz homeomorphisms.  
\end{lemma}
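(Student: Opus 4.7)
The plan is to conjugate via a homeomorphism $\varphi$ of $\mathbb{R}$ built from a $G$-quasi-invariant Radon measure. Fix a symmetric finite generating set $\mathcal{G}$ of $G$. If one can produce a Radon measure $\mu$ on $\mathbb{R}$ with full support, with $\mu([0,\infty))=\mu((-\infty,0])=\infty$, and with $h_*\mu\leq K\mu$ for every $h\in\mathcal{G}$, then $\varphi(x):=\mu([0,x])$ for $x\geq 0$ and $\varphi(x):=-\mu([x,0])$ for $x<0$ is an orientation-preserving homeomorphism of $\mathbb{R}$, and the identity
$$\varphi g\varphi^{-1}(y)-\varphi g\varphi^{-1}(x)=\mu\bigl(g([\varphi^{-1}(x),\varphi^{-1}(y)])\bigr)\leq K(y-x)$$
(valid for $x<y$ and $g\in\mathcal{G}$) shows that each conjugated generator, and hence every conjugated element by composition, is Lipschitz.

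To produce such a $\mu$, the natural candidate is a Patterson--Sullivan-type average
$$\mu:=\sum_{g\in G}K^{-|g|}\,g_*\nu,$$
where $|\cdot|$ denotes word length with respect to $\mathcal{G}$, the constant $K$ is chosen strictly larger than the exponential growth rate of $G$ (so that $\sum_g K^{-|g|}<\infty$), and $\nu$ is a base Radon measure to be chosen in the next step. The quasi-invariance is then automatic: from $|h^{-1}g|\geq|g|-1$ we obtain $h_*\mu\leq K\mu$ for every $h\in\mathcal{G}$.

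The delicate point is the choice of $\nu$. Taking $\nu$ to be a finite compactly supported measure would make $\mu$ locally finite but of finite total mass, so that $\varphi$ would only map $\mathbb{R}$ onto a bounded open interval. I would therefore take $\nu$ of the form $\sum_n c_n\nu_n$, where each $\nu_n$ is a small finite measure of full support near a point $p_n$, with $p_n\to\pm\infty$ as $n\to\pm\infty$ and $\sum c_n=\infty$. The observation that makes this feasible is that the function
$$f(x):=\min\{\,|g|\ :\ g(x)\in[-1,1]\,\}$$
tends to infinity as $|x|\to\infty$ (otherwise $[-1,1]$ would have only finitely many $G$-translates covering all of $\mathbb{R}$, impossible by compactness). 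Consequently, for a distant reference point $p_n$ the orbit $G\cdot p_n$ only reaches any given compact set at word length at least $f(p_n)$, so the contribution of $\nu_n$ to $\mu$ on that compact is controlled by a factor $K^{-f(p_n)}$.

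The main obstacle is to balance sparsity of the $p_n$ against the need for infinite total mass on each half-line: one must choose $(p_n,c_n)$ so that $c_n K^{-f(p_n)}$ is summable (ensuring local finiteness of $\mu$) while $\sum c_n=\infty$ (ensuring the mass on each half-line is infinite). Since $f(x)\to\infty$, the ratio $K^{f(p_n)}/c_n$ can be made to grow while $c_n$ itself is still divergent, so such a choice exists; full support of $\mu$ is then arranged by requiring each $\nu_n$ to have full support on a small interval, and the density of the orbit structure of $G$ on the remaining components. This tuning is the technical heart of the proof.
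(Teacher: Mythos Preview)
Your core idea---average a base measure over the group with exponentially decaying weights to obtain a quasi-invariant measure, then conjugate by its ``distribution function''---is exactly the mechanism the paper uses. The divergence is in how you handle the target of the conjugacy. You try to build an \emph{infinite} Radon measure $\mu$ so that $\varphi(x)=\pm\mu$ of a half-line is already a homeomorphism of $\mathbb{R}$; the paper instead averages a \emph{probability} measure $\lambda$ with density $f(x)=1/x^2$ for large $|x|$, obtains a probability $\nu=\sum_g \varphi(g)\,g_*\lambda$, and conjugates by the map sending $\nu$ back to $\lambda$. The Lipschitz bound then comes from the observation that $(h^{-1})_*\lambda\le L\lambda$ forces $h'\le L^3$ at infinity, precisely because of the $1/x^2$ tail. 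This sidesteps all of your infinite-mass bookkeeping: full support, absence of atoms, and local finiteness are automatic for $\nu$, and the stretch from a bounded interval back to $\mathbb{R}$ is absorbed into the $1/x^2$ density computation.

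Your route is workable in principle but the sketch has genuine holes. First, your local-finiteness argument via $f(x)=\min\{|g|:g(x)\in[-1,1]\}$ only controls $\mu([-1,1])$; you need $\mu(C)<\infty$ for \emph{every} compact $C$, and since $f$ (and the choice of $p_n$, $c_n$) was tuned to $[-1,1]$, a further diagonal argument over an exhaustion by compacts is required and not supplied. You cannot bootstrap from $[-1,1]$ via quasi-invariance unless the $G$-orbit of $[-1,1]$ covers $\mathbb{R}$, which is not assumed. Second, full support of $\mu$ is asserted by appealing to ``density of the orbit structure of $G$'', but nothing in the hypotheses guarantees that $G$-translates of the small intervals $\mathrm{supp}(\nu_n)$ are dense; if $G$ has an invariant open set disjoint from $\bigcup_n\mathrm{supp}(\nu_n)$, your $\mu$ vanishes there. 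Both issues are fixable (choose $p_n$ fast enough against an exhaustion; take each $\nu_n$ of full support on $\mathbb{R}$ with controlled tails), but the paper's device of a finite measure with $1/x^2$ tails eliminates the entire difficulty in one stroke.
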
 

\begin{proof} Our proof is inspired by a discussion with Marie-Claude Arnaud. In~\cite{DKNP}, we give a more conceptual proof (but more elaborate) based on probabilistic arguments. 

Let $\lambda = f(x) dx$ be a probability measure on ${\mathbb R}$ with a smooth and positive density $f$ such that for $|x|$ big enough, we have $f(x) = 1/x^2$. The following observation will be central in what follows: if, for some constant $L \geq 1$, a homeomorphism $h$ from the real line to itself satisfies
\begin{equation}\label{eq: cauchy condition} (h^{-1}) _* \lambda \leq L \lambda ,\end{equation}
then $h$ is Lipschitz. To prove this fact, first observe that $\lambda([x, +\infty) ) = \frac{1}{x}$, for $x$ a large positive number (and similarly $\lambda (-\infty, x] ) = \frac{1}{|x|}$ if $x$ is large negative number). Thus, the inequality (\ref{eq: cauchy condition}) shows that for $|x|$ large enough, $\frac{1}{| h(x) |} \leq \frac{L}{|x|}$. The density of $(h^{-1}) _* \lambda$ is given by $h'(x) f( h(x) ) $, hence (\ref{eq: cauchy condition}) gives the bound $h' (x) \leq \frac{ L f(x) }{ f (h(x)) }$ for almost every $x$. Thus, up to sets of Lebesgue measure $0$, $h'$ is bounded on every compact interval, and for $|x|$ large enough we have $h'(x) \leq L^3$; this proves that $h'$ is bounded, and hence $h$ is Lipschitz.  

Denote by $G$ a finitely generated subgroup of $\mathrm{Homeo}^+ ({\mathbb R})$, and let $\mathcal G$ be a finite system of generators for $G$. Let $\varphi \in L^1(G)$ be a function with positive values such that, for every element $h\in G$, there is a constant $L_h$ such that $\varphi(hg ) \leq L_h  \varphi(g)$; for instance one can take $\varphi (g) = \alpha ^{||g||}$ with $\alpha$ a small enough positive number, where $||g||$ is the minimum length of a word in the elements of $\mathcal G$ which equals $g$. Normalize the function $\varphi$ so that $\sum_{g\in G} \varphi(g) = 1$, and introduce the probability measure on ${\mathbb R}$ defined by
\[  \nu : = \sum_{g\in G} \varphi (g)\cdot  g_* \lambda .\]
Observe that for every $h\in G$, we have 
\[  h_* \nu = \sum _{g\in G} \varphi (g)\cdot  (hg)_* \lambda \leq L \nu ,\]
where $L= L_{h^{-1}}$. 

The measure $\nu$ has full support and no atoms. Thus, there exists a homeomorphism $\phi$ from the real line to itself which maps $\nu $ to $\lambda$. Denote $h^{\Phi} = \Phi \circ h \circ \Phi^{-1}$. We have 
\[  h^{\Phi} _* \lambda = \Phi_* h_* \nu \leq L \Phi_* \nu = L \lambda . \]
From the discussion above, we deduce that $G^{\Phi}$ is contained in $\mathrm{Bilip}^+(\mathbb R)$.  \end{proof}

Let $\mathcal G$ be a finite symmetric system of generators of $G$ and let $K>1$ and $0< C< D$ some constants. We denote by $R = R (G, \mathcal G, K, C, D)$ the set of representations $\rho : G\rightarrow \mathrm{Bilip}^+(\mathbb R)$ such that for every $g\in \mathcal G$, $K(g) \leq K$ and for every $x\in \mathbb R$: 
\begin{equation}\label{eq: displacement condition}  x-D \leq \min _{g\in \mathcal G} g(x) \leq x - C \leq x+ C \leq \max _{g\in \mathcal G} g(x) \leq x + D \end{equation}
The set $R$ can be seen as a closed subset of $\mathrm{Bilip}^+ (\mathbb R)^G$, and as such is equipped with the product topology; the relations (\ref{eq: lipschitz}) and (\ref{eq: displacement condition}) implies that $R$ is a compact set.

\begin{lemma} 
There are constants $K>1$ and  $C, D>0$ and a finite generating set $\mathcal G$ of $G$ such that $R$ is non empty. 
\end{lemma}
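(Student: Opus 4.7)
I will exhibit an explicit element of $R$. Per the remark right after Theorem~\ref{t: almost-periodic action}, I may assume that $G = G_0 \times \langle \tau \rangle$, where $G_0$ is a finitely generated left-orderable group and $\tau$ is a central generator; my representation $\rho$ will have $\tau$ act as the translation $x \mapsto x + 1$ and the factor $G_0$ act periodically, commuting with $\tau$.

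Since $G_0$ is countable and left-orderable, it admits a faithful action on $\mathbb{R}$. Transporting this through any homeomorphism $\mathbb{R} \to (0,1)$ and extending continuously by $0 \mapsto 0$, $1 \mapsto 1$ yields a faithful action of $G_0$ on $[0,1]$ fixing both endpoints. I then apply the interval analogue of Lemma~\ref{l: lipschitz conjugation} using Lebesgue measure on $[0,1]$ as the reference measure $\lambda$: since the density is the constant $1$, the condition $(h^{-1})_*\lambda \le L\lambda$ collapses to $h'(x) \le L$ almost everywhere, so $h$ is $L$-Lipschitz on $[0,1]$, and the rest of the proof (forming $\nu = \sum_{g \in G_0} \varphi(g)\, g_*\lambda$ and conjugating by a homeomorphism $\phi : [0,1]\to[0,1]$ with $\phi_*\nu = \lambda$) carries over verbatim. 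The output is a finite generating set $\mathcal{G}_0 = \{g_1,\ldots,g_k\}$ for $G_0$ whose elements act as $K_0$-bilipschitz homeomorphisms of $[0,1]$ fixing $\{0,1\}$.

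Extending periodically to $\mathbb{R}$, I set $\rho(g_i)(x + n) := g_i(x) + n$ for $x \in [0,1]$ and $n \in \mathbb{Z}$, and $\rho(\tau)(x) := x + 1$. Each $\rho(g_i)$ fixes $\mathbb{Z}$ pointwise and preserves every interval $[n, n+1]$, hence commutes with $\rho(\tau)$; this defines a representation $\rho : G \to \mathrm{Bilip}^+(\mathbb{R})$. Taking $\mathcal{G} = \{g_1^{\pm 1}, \ldots, g_k^{\pm 1}, \tau^{\pm 1}\}$, $K = K_0$, $C = 1$, and $D = 2$: each generator is $K_0$-bilipschitz; each $\rho(g_i^{\pm 1})$ has displacement at most $1$ since it preserves every unit interval; and $\max_{g \in \mathcal{G}} \rho(g)(x) \ge \rho(\tau)(x) = x + 1$ together with $\min_{g \in \mathcal{G}} \rho(g)(x) \le \rho(\tau^{-1})(x) = x - 1$ supply the two-sided lower bounds. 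Hence $\rho \in R(G, \mathcal{G}, K, C, D)$.

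The principal technical point is the interval version of Lemma~\ref{l: lipschitz conjugation}, which is in fact easier than the $\mathbb{R}$ version: on a compact interval one may use Lebesgue as the reference measure, and the density-ratio argument collapses directly to $h'(x) \le L$ with no need for the singular tail behavior. Everything else reduces to the bookkeeping of the periodic extension and verification of the displacement bounds.
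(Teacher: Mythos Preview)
Your construction does produce a representation of $G_0\times\langle\tau\rangle$ lying in some $R$, but it misses what the lemma is actually for. Two related problems:

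\textbf{(1) The structural assumption is unjustified.} The remark after Theorem~\ref{t: almost-periodic action} does not say you may take $G=G_0\times\langle\tau\rangle$. It says that, to reduce the first assertion to the second, one may enlarge $G$ to $\langle G,\tau_s\rangle\subset\mathrm{Homeo}^+(\mathbb R)$ for some $s\neq 0$, so that the \emph{given} action becomes fixed-point free. That enlarged group is almost never a direct product: a nontrivial translation will generally not commute with the original $G$-action, so your decomposition $G=G_0\times\langle\tau\rangle$ with $\tau$ central is simply not available.

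\textbf{(2) You lose the conjugacy to the original action.} Even granting the product structure, you build the $G_0$-action on $[0,1]$ from scratch out of left-orderability, then extend periodically. The resulting representation has nothing to do with the action we started with; it is not a conjugate of it. But the lemma sits inside the proof of Theorem~\ref{t: almost-periodic action}, whose content is precisely that the \emph{given} fixed-point-free action is conjugate to an almost-periodic one without almost fixed point. Producing some unrelated element of some $R$ for some enlargement of $G$ does not advance that proof.

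What the paper does instead is intrinsic to the given action: starting from the bilipschitz conjugate supplied by Lemma~\ref{l: lipschitz conjugation}, it sets $x_0=0$, $x_{n+1}=\max_{g\in\mathcal G}g(x_n)$, uses absence of fixed points to get $x_n\to\pm\infty$, and conjugates by the piecewise-affine map $\varphi$ sending $x_n\mapsto n$. The bilipschitz constant controls the ratios $\delta_{n+1}/\delta_n$, which makes $\varphi$ quasi-affine on three consecutive intervals; this yields $K(\rho(g))\le K^3$ and, after passing to the generating set $\overline{\mathcal G}=\mathcal G\cup\mathcal G^2$, the displacement bounds with $C=1$, $D=4$. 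That argument is a genuine conjugation of the original action, which is what is needed.
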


\begin{proof} Let $K$ be a constant such that for every $g\in \mathcal G$, $K(g) \leq K$. The condition (\ref{eq: displacement condition}) might not be satisfied, e.g. when the action is affine. So we will have to modify our action and build a new one. To do so, we define a sequence of points $x_n\in \mathbb R$ for every $n\in {\mathbb Z}$ by $x_0= 0$ and $x_{n+1} = \max _{g\in \mathcal G} g(x_n)$, or equivalently $x_{n-1} = \min _{g\in \mathcal G} g (x_n)$ since $\mathcal G$ is symmetric. Because $G$ has no fixed point on the real line, we have 
\[ \lim _{ n\rightarrow \pm \infty} x_n = \pm \infty. \]  
We let $\varphi$ be the homeomorphism from the real line to itself which sends $x_n$ to $n$, and is affine on the intervals $[x_n, x_{n+1}]$. We claim that the action of $G$ on the real line defined by $\rho (g) = \varphi \circ g \circ \varphi ^{-1}$ belongs to $R(G,\overline{\mathcal G}, K^6,1,4)$ for the generating set $\overline{\mathcal G} = \mathcal G \cup \mathcal G^2$. 

To prove this, we remark that the distortion of the sequence $x_n$ is uniformly bounded; more precisely for every integer $n\in {\mathbb Z}$, denoting $\delta_n = x_{n+1} - x_n$, we have 
\begin{equation} \label{eq: distortion} K ^{-1} \cdot \delta_{n+1}  \leq  \delta_n \leq K \cdot  \delta_{n+1}. \end{equation}
To see this, write $x_{n+1} = g_{n} (x_n)$, where $g_n \in \mathcal G$. By definition $g_n(x_{n+1} ) \leq x_{n+2}$,  and because $g_n $ is a $K$-bilipschitz map, we get 
\[  x_{n+2} - x_{n+1} \geq g_n (x_{n+1} ) - g_n(x_n) \geq K^{-1}\cdot  (x_{n+1} - x_n),\]
hence the right inequality in (\ref{eq: distortion}). The left one is obtained by analogous considerations. This implies that $\varphi$ is close to be affine on $[x_{n-1}, x_{n+2}]$; more precisely, for every pair of points $w,z\in [x_{n-1}, x_{n+2}]$, we have 
\[ \frac{|z-w|}{K\cdot \delta_n} \leq | \varphi (z) -\varphi (w) | \leq \frac{K\cdot |z-w|}{\delta_n}.\]  

We are now able to prove that for every $g\in \mathcal G$, the map $\rho(g)$ is Lipschitz and $K (\rho(g) ) \leq K^3$. It suffices to prove that $\rho(g)$ is lipschitz on every interval of the form $[n,n+1]$ with Lipschitz constant $K^3$. Consider two points $x,y\in [n,n+1]$ and define $w= \varphi^{-1} (x)$, $z= \varphi^{-1} (y)$: we have 
\[ |\rho(g) (y) - \rho(g) (x) | \leq |\varphi ( g (z) ) - \varphi (g(w)) | \leq \frac{K\cdot |g(z)-g(w)|}{\delta_n}\leq \frac{K^2\cdot |z-w|}{\delta_n} \leq K^3 |y-x|. \]

By construction, for every element $g\in\mathcal G$,  
\[ x-2 \leq  \rho (g) (x) - x \leq x +2,  \]
because the integer points next after and before $x$ are moved a distance less than $1$ by $\rho(g)$.
Moreover, for every $n\in {\mathbb Z}$, we have $\rho(g_{n+1} g_n) (n) = n+2$. Hence, for every $x\in {\mathbb R}$ we have $\rho(g_{n+1} g_n) (x) \geq x+1$, $n$ being the integer part of $x$. Hence, we have proved that $\rho $ belongs to $R (G,\overline{\mathcal G}, K^6 , 1, 4 )$.  \end{proof}

\section{Proof of Witte's theorem}

Let $G$ be a finitely generated left-orderable group. It is a well-known fact that there is a faithful action of $G$ on the real line. By Theorem~\ref{t: almost-periodic action}, this action is topologically conjugated to an almost-periodic action, without almost fixed point. We denote by $\rho_0\in APR(G)$ the induced representation. Let $X(\rho_0)$ be the compact set constructed in Proposition~\ref{p:closure under translation flow}, the closure of $\rho_0$ under the translation flow $\Phi$. Adding to $G$ some finitely generated almost-periodic group if necessary, we can suppose that the translation flow acts freely on $X$. Now, $\rho_0$ having no almost fixed point, the group $G$ does not have any fixed point on $X(\rho_0)$.  

Suppose that $G$ is amenable. Then, there exists a probability measure $m$ on $X$ which is invariant by the representation $\mathrm{Univ}$. We are interested in the conditional measures of $m$ along the orbits of the translation flow. These are Radon measures on $m$-almost every $\Phi$-orbit $\lambda$, well-defined up to multiplication by a positive constant, and are denoted by $m_{\lambda}$. More precisely, in a flow box $[0, 1]\times \Lambda$ where the flow $\Phi$ is given by the formula $\Phi_s (t,\lambda ) = (t+s, \lambda)$, we desintegrate the measure $m$ as 
\[ m (dt , d\lambda ) = m_{\lambda} (dt )\ \overline{m} (d\lambda) \]
where $\overline{m}$ is the image of $m$ under the projection $[0, 1] \times \Lambda \rightarrow \Lambda$ and the measures $m_{\lambda}$ are measures on the unit interval. The measures $m_{\lambda}$ depend non trivially on the flow box; however, they are well-defined up to a positive constant.

Because $G$ preserves $m$, and is countable, for $m$-almost every $\Phi$-orbit $\lambda$ of $X$ and every element $g$ of $G$, the measure $g_* m_{\lambda}$ is a positive constant times $m_{\lambda}$~: 
\[  g_* m_{\lambda} = c_{\lambda} (g) m_{\lambda}\ \ \ \mathrm{where}\ \ \ c_{\lambda}(g) >0 .\] 
If $m_{\lambda}$ is not preserved by $G$, the map $g\mapsto \log c_{\lambda}$ is a non trivial morphism from $G$ to ${\mathbb R}$, and hence we deduce the existence of a non trivial morphism to $\mathbb Z$. If not, $m_{\lambda}$ is preserved by $G$, and either $m_{\lambda}$ is atomic or not. In the first case, $m_{\lambda}$ has an atom whose orbit is discrete, and the group acts as a translation on it, giving rise to a non trivial morphism to the integers. In the second case, the action is semi-conjugated to an action by translations, which defines a non trivial morphism to the reals, and hence to the integers. In all cases, $G$ carries a non trivial morphism to ${\mathbb Z}$.   

\vspace{0.54cm}

\begin{small}

\noindent{\bf Acknowledgments.} It is a pleasure to thank Victor Kleptsyn and Andr\`es Navas for stimulating conversations on group actions on the real line, and Marie-Claude Arnaud, who invented a nice argument that simplifies the proof of Lemma~\ref{l: lipschitz conjugation}.

\vspace{0.1cm}

\vspace{0.3cm}

\end{small}

\end{document}